\def\th{\theta}
\def\Th{\Theta}
\def\O{\Omega}
\def\N{\mathbb{N}}
\def\T{\mathbb{T}}
\def\X{{\sf X}}
\def\H{\mathcal H}
\def\Q{\mathcal Q}
\def\R{\mathbb{R}}
\def\e{{\sf e}}
\def\m{{\sf m}}
\def\({\left(}
\def\[{\left[}
\def\){\right)}
\def\]{\right]}
\def\p{\parallel}
\def\<{\langle}
\def\>{\rangle}
\def\supp{\mathop{\mathrm{supp}}\nolimits}
\newtheorem{Theorem}{Theorem}[section]
\newtheorem{Remark}[Theorem]{Remark}
\newtheorem{Lemma}[Theorem]{Lemma}
\newtheorem{Proposition}[Theorem]{Proposition}
\newtheorem{Definition}[Theorem]{Definition}
\newtheorem{Example}[Theorem]{Example}
\numberwithin{equation}{section}
\begin{document}


\title{Anisotropic Gohberg Lemmas for\\ Pseudodifferential Operators on Abelian Compact Groups}

\date{\today}

\author{M. M\u antoiu \footnote{
\textbf{2010 Mathematics Subject Classification: Primary 43A77, 47G30; Secondary 22C05, 47A63.}
\newline
\textbf{Key Words:}  compact group, pseudo-differential operator, symbol, vanishing oscillation, Gohberg lemma
\newline
{
}}
}

\date{\small}

\maketitle \vspace{-1cm}


\begin{abstract}
Classically, Gohberg-type Lemmas provide lower bounds for the distance of suitable pseudodifferential operators acting in a Hilbert space to the ideal of compact operators, in terms of "the behavior of the symbol at infinity". In this article the pseudodifferential operators are associated to a compact Abelian group $\X$ and an important role is played by its Pontryagin dual $\widehat\X$\,. H\"ormander- type classes of symbols are not always available; they will be replaced by crossed product $C^*$-algebras involving a vanishing oscillation condition, which anyway is more general even in the particular cases allowing a full pseudodifferential calculus. In addition, the distance to a large class of operator ideals is controlled; the compact operators only form a particular case. This involves invariant closed subsets of certain compactifications of the dual group or, equivalently, invariant ideals of $\ell^\infty(\widehat\X)$\,.
\end{abstract}

\section{Introduction}\label{didirlish}

A compact Abelian Lie group $\X$ is given, with Pontryagin dual $\Xi$\,. This one is an Abelian discrete group (the most general one).
For suitable functions $f:\X\!\times\!\Xi\to\mathbb C$ one defines {\it the pseudodifferential operator} in $\H:=L^2(\X)$ 
\begin{equation}\label{zurzurel}
\begin{aligned}
\big[{\sf Op}(f)u\big](x):&=\int_\X\sum_{\xi\in\Xi}\xi\big(xy^{-1}\big)f(x,\xi)u(y)dy\\
&=\sum_{\xi\in\Xi}\xi(x)f(x,\xi)\widehat u(\xi)\,.
\end{aligned}
\end{equation}

Maybe changing the setting, typically, results \'a la Goldberg \cite{Go,Gr,Ho,KN,MW,DR,RVR} state that
\begin{equation}\label{ares}
\p\!{\sf Op}(f)-K\!\p_{\mathbb B(\H)}\,\ge\limsup_{\infty}|f(x,\xi)|
\end{equation}
for every compact operator $K$, under various circumstances upon the setting and the function $f$. Some of the references only treat singular integral operators. The recent paper \cite{RVR} refers to situations in which there is no group structure.  \cite{DR} applies to maybe non-commutative compact Lie groups; the function $f$ is then matrix-valued and this brings extra complications. 

\smallskip
The remarkable fact is that in \eqref{ares} the operator norm of a certain operator involving a non-commutative calculus with functions $f$ admits a lower bound in terms of the point-wise (asymptotic) behavior of $f$. We were deliberately imprecise about the type of $\limsup$ which is involved; this depends on the framework. In our case one has $\underset{\xi\to\infty}{\limsup}$\,, basically because the group $\X$ is compact. 

\smallskip
We are going to call {\it symbol} a function on the {\it phase space} $\X\!\times\!\Xi$ to which the quantization (= pseudodifferential calculus) ${\sf Op}$ applies. Only in limited situations it is supposed to belong to some H\"ormander classes $S^m_{\rho,\delta}(\X\!\times\!\Xi)$\,; in certain cases this could not even make sense. Partially Fourier transformed crossed product $C^*$-algebras supply here convenient symbol classes.
For the rather basic facts we use about crossed products we refer to \cite{Wi}.

\smallskip
One of the purposes of the present article is to treat situations when symbol classes of the type $S^m_{\rho,\delta}(\X\!\times\!\Xi)$ are not defined, covering all the Abelian compact groups (unfortunately, the non-Abelian groups seem to be out of reach by the present methods). In addition, even when they do exist, they are only strictly included in our classes, which involve a vanishing oscillation condition in the variable $\xi\in\Xi$\,. This condition is also simpler and it looks optimal for the selected setting.

\smallskip
Of course, in \eqref{ares} the left hand side could be replaced by $\,\inf_{K}\!\p\!{\sf Op}(f)-K\!\p_{\mathbb B(\H)}$\,, the infimum being taken over all the compact operators. But this expression coincides with the norm of the image of ${\sf Op}(f)$ in the quotient Calkin algebra. This hints to the fact that ${\sf Op}(f)$ should belong to a full $C^*$-algebra and not only to families defined by (some generalized form of) smoothness and decay. It turns out that this one is, as mentioned above, the partial Fourier transform of the crossed product of an Abelian $C^*$-algebra of vanishing oscillation functions on the dual group $\Xi$ by the action of $\Xi$ by translations. See \cite{M} and references cited therein for applications of such crossed products in spectral analysis and \cite{Ma} for various generalizations of vanishing oscillation in a different context. We also refer to contributions of N. Higson and J. Roe, with many consequences in Noncomutative Geometry. We indicate once again that this condition appears in a very natural way in the proof in Section \ref{bergman}. 

\smallskip
But another interesting question is raised by the interpretation above, which is treated in the present paper. One could ask for lower bounds for the distance from a given pseudodifferential operator to a suitable ideal of operators, the compact ones only being a particular case. The framework and the techniques of proof points towards ideals of symbols constructed, also through a partial Fourier transform, from crossed products with the group $\Xi$ acting on invariant ideals of $\ell^\infty(\Xi)$\,. These ones are in a one-to-one correspondence with closed invariant subsets $\O$ of the corona space $\delta\Xi:=\beta\Xi\!\setminus\!\Xi$\,, where the Stone-\u Cech compactification $\beta\Xi$ is the Gelfand spectrum of $\ell^\infty(\Xi)$\,. Besides defining such an ideal, $\O$ also defines a sort of generalization of {\it the vanishing oscillation property} (also called {\it slowly oscillation}), associated with an optimal class of symbols ${\sf VO}^\O(\X\!\times\!\Xi)$ for which an extension of Gohberg's Lemma holds. The lower bound is in this case ${\sf D^\O(f)}=\underset{\xi\to\O}{\limsup}\,\underset{x\in\X}{\sup}|f(x,\xi)|$\,. The presence of $\O$ is responsible for the term "anisotropic" in the title. The standard case corresponds to the choice $\O=\delta\Xi$\,. 

\smallskip
In Section \ref{bogart} we describe the framework and make the necessary preparations. The main result is stated and proved in Section \ref{bergman}. A final section is dedicated to some complementary remarks.

\smallskip
{\bf Conventions:} For every Hilbert space $\H$ we denote by $\mathbb  B(\H)$ the $C^*$-algebra of all bounded linear operators and by $\mathbb K(\H)$ the ideal of all compact operators. Homomorphisms and, in particular, representations of $C^*$-algebras are supposed to commute with the involutions. Ideals are implicitly assumed to be bi-sided, closed and self-adjoint. If $Y$ is a locally compact Hausdorff space, $C(Y)$ denotes all the continuous complex functions and $C_0(Y)$ the functions which, in addition, converge to zero at infinity. If $Y$ is discrete, we use consacrated notation as $c_0(Y)$ and $\ell^p(Y)$\,.

\section{Algebras of pseudifferential operators on Abelian compact groups}\label{bogart}

Let $\X$ be a compact Abelian group, with unit $\e$ and with (Abelian discrete) Pontryagin dual $\widehat\X\equiv \Xi$\,. Both group laws will be denoted multiplicatively. The duality will be denoted by both expressions $\xi(x)$ and $x(\xi)$\,, which is allowed by the Pontryagin-van Kampen Theorem. On $\X$ we consider the normalized Haar measure $d\m(x)\equiv dx$\,. {\it The Fourier transformation} $\mathcal F:L^1(\X)\to c_0(\Xi)$ can also be defined as a unitary operator $\mathcal F:L^2(\X)\to \ell^2(\Xi)$ and it is essentially given by 
\begin{equation*}\label{ayshlan}
(\mathcal F\varphi)(\xi):=\int_\X\overline{\xi(x)}\varphi(x)dx\,,
\end{equation*}
with inverse
\begin{equation*}\label{alyiar}
\big(\mathcal F^{-1}\psi\big)(x):=\sum_{\xi\in\Xi}\xi(x)\psi(x)\,.
\end{equation*}
The Riemann-Lebesgue Lemma also insures the interpretation $\mathcal F^{-1}\!:\ell^1(\Xi)\to C(\X)$ (linear contraction).

\smallskip
Recall the isomorphism $\ell^\infty(\Xi)\cong C(\beta\Xi)$\,, where $\beta\Xi=\Xi\sqcup\delta\Xi$ is {\it the Stone-\u Cech compactification} of $\Xi$\,. We denote by $\th:\Xi\to{\rm Homeo}(\beta\Xi)$ the canonical  extension to $\beta\Xi$ of the action of $\Xi$ on itself by translations. It lifts to an action on $C(\beta\Xi)$ by 
\begin{equation*}\label{ertugrul}
\big[\Th_\eta(\psi)\big](\xi):=\psi\big[\th_{\eta}(\xi)\big]\,,\quad\forall\,\xi\in\beta\Xi\,,\,\eta\in\Xi\,.
\end{equation*}
With this action one can form {\it the crossed product} \cite{Wi} $\mathfrak C:=\Xi\ltimes\ell^\infty(\Xi)$\,.  It is the enveloping $C^*$-algebra of the Banach $^*$-algebra $\ell^1\big(\Xi;\ell^\infty(\Xi)\big)$ (isomorphic as a Banach space to the projective tensor product $\ell^1(\Xi)\overline\otimes\ell^\infty(\Xi)$)\,, with the obvious norm, the composition law
\begin{equation*}\label{kukuruja}
(\Phi\diamond\Psi)(\xi):=\sum_{\eta\in\Xi}\Th_\eta\big[\Phi(\xi\eta^{-1})\big]\Psi(\eta)
\end{equation*}
and the involution
\begin{equation*}\label{ibn}
\Psi^\diamond(\xi):=\Th_\xi\big[\Psi(\xi^{-1})^*\big]\,.
\end{equation*}
Using notations as $\big[\Psi(\xi)\big](\zeta)\equiv\Psi(\xi,\zeta)$\,, the algebraic structure of $\ell^1\big(\Xi;\ell^\infty(\Xi)\big)$ can be reformulated as 
\begin{equation*}\label{kulturija}
(\Phi\diamond\Psi)(\xi,\zeta):=\sum_{\eta\in\Xi}\Phi\big(\xi\eta^{-1}\!,\zeta\eta\big)\Psi(\eta,\zeta)\,,
\end{equation*}
\begin{equation*}\label{ben}
\Psi^\diamond(\xi,\zeta):=\overline{\Psi\big(\xi^{-1}\!,\zeta\xi\big)}\,.
\end{equation*}
There is a natural {\it (Schr\"odinger) representation} of $\mathfrak C$ in the Hilbert space $\ell^2(\Xi)$ given for $\Psi\in\ell^1\big(\Xi;\ell^\infty(\Xi)\big)$ by
\begin{equation}\label{arabi}
\big[{\sf sch}(\Psi)w\big](\xi):=\sum_{\eta\in\Xi}\Psi\big(\xi\eta^{-1}\!,\eta\big)w(\eta)\,.
\end{equation}
It is clear that ${\sf sch}$ also transforms isomorphically the Hilbert space $\ell^2(\Xi\!\times\!\Xi)$ into the Hilbert space $\mathbb B^2\big[\ell^2(\Xi)\big]$ of all Hilbert-Schmidt operators in $\ell^2(\Xi)$\,.

\smallskip
The operator \eqref{zurzurel} is an integral operator with kernel $\kappa_f(x,y):=\big(\mathbb F^{-1}f\big)\big(x,xy^{-1}\big)$\,, where $\mathbb F^{-1}\!:={\rm id\otimes\mathcal F^{-1}}$ is the inverse Fourier transformation in the second variable. It is a standard simple fact that ${\sf Op}$ is a Hilbert space isomorphism between $L^2(\X\!\times\!\Xi)\cong L^2(\X)\otimes\ell^2(\Xi)$ and the space $\mathbb B^2\big[L^2(\X)\big]$ of all Hilbert-Schmidt operators in $L^2(\X)$\,. (Below we will indicate another, $C^*$-algebraic meaning.)
We note that ${\sf Op}(\varphi\otimes\psi)=\varphi(Q)\psi(P)$\,, in terms of a multiplication operator $\varphi(Q)$ and the convolution operator
\begin{equation*}\label{gangurel}
\psi(P)=\mathcal F^{-1}\psi\big(Q_\Xi\big)\mathcal F={\sf Conv}\big(\mathcal F^{-1}\psi\,\big)\,.
\end{equation*}
Here $\psi\big(Q_\Xi\big)$ denoted the operator of multiplication by $\psi$ in $\ell^2(\Xi)$\,.

\smallskip
We make now the connection between ${\sf sch}$ and ${\sf Op}$\,. Defining the operator ${\sf op}(f)$ on $\ell^2(\Xi)$ by the unitary equivalence
\begin{equation*}\label{bey}
{\sf op}(f):=\mathcal F\,{\sf Op}(f)\mathcal F^{-1}\!\equiv\mathbf U^{-1}_\mathcal F\big[{\sf Op}(f)\big]\,,
\end{equation*} 
and applying the Fourier Inversion Formula, one gets explicitly
\begin{equation}\label{cavdar}
\big[{\sf op}(f)w\big](\xi)=\sum_{\eta\in\Xi}\int_\X x\big(\xi^{-1}\eta\big)f(x,\eta)w(\eta)dx=\sum_{\eta\in\Xi}\int_\X x\big(\xi\eta^{-1}\big)f\big(x^{-1}\!,\eta\big)w(\eta)dx\,.
\end{equation}
It is a pseudodifferential operator ${\sf Op}_\Xi^R(f\circ\mu)$ on the discrete Abelian group $\Xi$\,, with respect to a natural {\it right quantization}, for the symbol 
\begin{equation}\label{bazar}
\Xi\times\X\ni(\eta,x)\to f\big[\mu(\eta,x)\big]:=f\big(x^{-1}\!,\eta\big)\in\mathbb C\,.
\end{equation} 
We mention in passing that a full notation for ${\sf Op}(f)$ could be ${\sf Op}_\X^L(f)$\,, to mean that a {\it left quantization} (Kohn-Nirenberg type) is considered, starting with the group $\X$\,. In terms of the Schr\"odinger representation \eqref{arabi} of the crossed product, ${\sf op}$ satisfies
\begin{equation*}\label{kopek}
{\sf op}\big[\big(\mathcal F^{-1}\!\otimes{\rm id}\big)(\Psi)\big]={\sf sch}(\Psi)\,.
\end{equation*}
The final conclusion is that
\begin{equation*}\label{didirlis}
{\sf Op}=\mathbf U_\mathcal F\circ{\sf sch}\circ(\mathcal F\otimes{\rm id})\,,
\end{equation*}
which can be summarized in the diagram
\begin{equation*}\label{diagram}
\begin{diagram}
\node{\ell^2(\Xi\!\times\!\Xi)}\arrow{e,t}{\mathcal F^{-1}\!\otimes{\rm id}} \arrow{s,l}{{\sf sch}}\node{L^2(\X\!\times\!\Xi)}\arrow{s,r}{{\sf Op}}\arrow{sw,t}{{\sf op}}\\ 
\node{\mathbb B^2[\ell^2(\Xi)]} \arrow{e,t}{\mathbf U_\mathcal F} \node{\mathbb B^2[L^2(\X)]}
\end{diagram}
\end{equation*}
composed of Hilbert space isomorphisms, or the diagram
\begin{equation}\label{diagramm}
\begin{diagram}
\node{\mathfrak C}\arrow{e,t}{\mathcal F^{-1}\!\otimes{\rm id}} \arrow{s,l}{{\sf sch}}\node{\mathfrak D}\arrow{s,r}{{\sf Op}}\arrow{sw,t}{{\sf op}}\\ 
\node{\mathbb B[\ell^2(\Xi)]} \arrow{e,t}{\mathbf U_\mathcal F} \node{\mathbb B[L^2(\X)]}
\end{diagram}
\end{equation}
in which ${\sf sch},{\sf op},{\sf Op}$ are representations of $C^*$-algebras, the last two being unitarily equivalent. 
\smallskip
We introduced the partial Fourier transform $C^*$-algebra 
$$
\mathfrak D:=\big(\mathcal F^{-1}\!\otimes\!{\rm id}\big)\mathfrak C\equiv\mathbf F^{-1}\mathfrak C
$$ 
with the transported algebraical and topological structure.  
Using the Riemman-Lebesgue Lemma and the universal property of the crossed product, it follows that 
\begin{equation}\label{ana}
\mathfrak D\subset C\big(\X;\ell^\infty(\Xi)\big)\cong C(\X\!\times\!\beta\Xi)\,.
\end{equation}
Of course, the norms on $\mathfrak D$ and $C(\X\!\times\!\beta\Xi)$ are different. Below, obvious versions of the diagram \eqref{diagramm} will involve restrictions to $C^*$-subalgebras of $\mathfrak C$\,.

\smallskip
We consider a $\th$-invariant closed (thus compact) subset $\O$ of $\delta\Xi$ and set 
\begin{equation*}\label{suleyman}
\ell^{\infty,\O}(\Xi)\cong C^\Omega(\beta\Xi):=\big\{\tilde\psi\in C(\beta\Xi)\,\big\vert\,\tilde\psi|_\O=0\big\}\,.
\end{equation*}
It is a (bi-sided, closed, self-adjoint) $\Th$-invariant ideal of $C(\beta\Xi)$\,. Here $\tilde\psi\equiv\beta\psi$ should be seen as the extension by continuity of $\psi\in\ell^\infty(\Xi)$\,. The quotient $\ell^{\infty}(\Xi)/\ell^{\infty,\O}(\Xi)$ is isomorphic to $C(\O)$\,. It is often useful to express simple objects and properties connected to $\tilde\psi$ in terms of its restriction $\psi$ to "the finite part" $\Xi$\,. We define $\mathscr V(\O)$ to be the filter of all neighborhoods of $\Omega$ in $\beta\Xi$ and 
\begin{equation*}\label{gudurel}
{\sf d}^\O(\psi)\equiv\limsup_{\xi\to\O}|\psi(\xi)|:=\!\underset{V\in\mathscr V(\O)}{\lim}\sup_{\xi\in V\cap\Xi}|\psi(\xi)|\,.
\end{equation*}
(One could also work only with a basis of neighborhoods.)

\begin{Lemma}\label{gundogdu}
One has ${\sf d}^\O(\psi)=\,\big\Vert\,\tilde\psi|_\O\big\Vert_\infty$ and, consequently, 
\begin{equation*}\label{selcan}
\ell^{\infty,\O}(\Xi)=\Big\{\psi\in\ell^\infty(\Xi)\,\Big\vert\,\limsup_{\xi\to\O}|\psi(\xi)|=0\Big\}\,.
\end{equation*}
There exists a net $(\xi_i)_{i\in I}$ in $\Xi$ such that $\xi_i\!\to\O$ and $|\psi\big(\xi_i\big)|\to{\sf d}^\O(\psi)$\,.
\end{Lemma}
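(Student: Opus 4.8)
The plan is to carry everything into the Gelfand picture $\ell^\infty(\Xi)\cong C(\beta\Xi)$ and use only two soft facts. First, since $\Xi$ is dense in $\beta\Xi$, for every \emph{open} $U\subseteq\beta\Xi$ and every $g\in C(\beta\Xi)$ one has $\sup_{U}|g|=\sup_{U\cap\Xi}|g|$: the inequality $\ge$ is trivial, and $\le$ holds because each point of $U$ is the limit of a net in $U\cap\Xi$ and $|g|$ is continuous. Second, the net $\big(\sup_{\xi\in V\cap\Xi}|\psi(\xi)|\big)_{V\in\mathscr V(\O)}$ is non-increasing as $V$ shrinks, so the limit defining ${\sf d}^\O(\psi)$ is its infimum; moreover every neighborhood of $\O$ contains an open one. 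Putting these together (and assuming, as we may, that $\O\neq\varnothing$, so the open sets in play meet the dense set $\Xi$), I would record
\begin{equation*}
{\sf d}^\O(\psi)=\inf_{V\in\mathscr V(\O)}\,\sup_{\xi\in V\cap\Xi}|\psi(\xi)|=\inf\Big\{\,\sup_{\omega\in U}|\tilde\psi(\omega)|\ \Big|\ U\subseteq\beta\Xi\ \text{open},\ U\supseteq\O\,\Big\}\,.
\end{equation*}

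From this, ${\sf d}^\O(\psi)\ge\big\Vert\tilde\psi|_\O\big\Vert_\infty$ is immediate since $\O\subseteq U$ for every competitor $U$. For the reverse bound I would, given $\varepsilon>0$, take $U_\varepsilon:=\big\{\omega\in\beta\Xi:|\tilde\psi(\omega)|<\|\tilde\psi|_\O\|_\infty+\varepsilon\big\}$; this is open by continuity of $|\tilde\psi|$ and contains $\O$, and $\sup_{U_\varepsilon}|\tilde\psi|\le\|\tilde\psi|_\O\|_\infty+\varepsilon$, so ${\sf d}^\O(\psi)\le\|\tilde\psi|_\O\|_\infty+\varepsilon$; letting $\varepsilon\to0$ gives the claimed equality. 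The displayed consequence is then a tautology: $\psi\in\ell^{\infty,\O}(\Xi)$ means $\tilde\psi|_\O=0$, i.e.\ $\|\tilde\psi|_\O\|_\infty=0$, i.e.\ ${\sf d}^\O(\psi)=0$.

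For the net I would put $M:={\sf d}^\O(\psi)=\|\tilde\psi|_\O\|_\infty$ and take as index set $I:=\mathscr V(\O)\times\N$ directed by $(V,n)\preceq(V',n')$ iff $V\supseteq V'$ and $n\le n'$. Given $(V,n)$, let $V_n$ be the set of $\omega$ in the interior of $V$ with $|\tilde\psi(\omega)|<M+1/n$; this is an open neighborhood of $\O$, so by the first fact $\sup_{\xi\in V_n\cap\Xi}|\psi(\xi)|=\sup_{V_n}|\tilde\psi|\ge M$, and I may choose $\xi_{(V,n)}\in V_n\cap\Xi$ with $|\psi(\xi_{(V,n)})|>M-1/n$; then automatically $|\psi(\xi_{(V,n)})|<M+1/n$ and $\xi_{(V,n)}\in V$. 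The resulting net lies in any prescribed $V_0\in\mathscr V(\O)$ once $i\succeq(V_0,1)$, so $\xi_i\to\O$, and $\bigl||\psi(\xi_i)|-M\bigr|<1/n$ once $i\succeq(\beta\Xi,n)$, so $|\psi(\xi_i)|\to M$. The only step needing any care is this construction: one must secure the lower bound $M-1/n$ and the upper bound $M+1/n$ on $|\psi(\xi_{(V,n)})|$ at the same time, which is exactly why the chosen point is taken inside the sublevel neighborhood $V_n$ rather than inside $V$; everything else is routine point-set topology on $\beta\Xi$.
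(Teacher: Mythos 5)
Your proof is correct, and it is exactly the standard point-set argument (density of $\Xi$ in $\beta\Xi$, continuity of $\tilde\psi$, monotonicity of the suprema over shrinking neighborhoods) that the paper itself invokes when it dismisses the lemma as ``simple well-known facts in topology'' without writing them out. Your construction of the net via the open sublevel neighborhoods $\{|\tilde\psi|<M+1/n\}$ is the right way to secure both bounds simultaneously, so nothing is missing.
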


\begin{proof}
Simple well-known facts in topology. We recall that $\xi_i\!\to\O$ means that for each $V\in\mathscr V(\O)$ there is some $i_V\in I$ with $\xi_i\in V$ if $i\ge i_V$\,.
\end{proof}

We also have the ideals 
\begin{equation*}\label{anatolia}
\mathfrak C^\O\!:=\Xi\!\ltimes\!\ell^{\infty,\O}(\Xi)\subset\mathfrak C\quad{\rm and}\quad\mathfrak D^\O\!:=\big(\mathcal F^{-1}\!\otimes\!{\rm id}\big)\mathfrak C^\O\equiv\mathbf F^{-1}\mathfrak C^\O\subset\mathfrak D\,.
\end{equation*} 
Note that $\mathfrak D^\emptyset\!=\mathfrak D$ and $\mathfrak D^{\delta\Xi}=\mathbf F^{-1}\big(\Xi\!\ltimes\!c_0(\Xi)\big)$\,. One has $X\!\times\!\beta\Xi=(X\!\times\!\Xi)\sqcup(X\!\times\!\delta\Xi)$\,. Using the filter $\{\X\!\times\! V\!\mid\!V\in\mathscr V(\O)\}$ of all the neighborhoods of $\X\!\times\!\O$ in $\X\!\times\!\beta\Xi$\,, an analog of Lemma \ref{gundogdu} is

\begin{Lemma}\label{gumustekin}
Let $f\in\mathfrak D$\,, with continuous extension $F$ to $\X\!\times\!\beta\Xi$ (by \eqref{ana})\,. One has 
\begin{equation}\label{gugustiuc}
{\sf D}^\O(f)\equiv\limsup_{(x,\xi)\to\X\times\O}|f(x,\xi)|=\,\p\!F|_{\X\times\O}\Vert_\infty
\end{equation} 
and 
\begin{equation*}\label{selchan}
\mathfrak D^\O=\Big\{f\in\mathfrak D\,\Big\vert\,\limsup_{(x,\xi)\to\X\times\O}|f(x,\xi)|=0\Big\}\,.
\end{equation*}
There exists a point $x_0\in\X$ and a net $\Xi\ni\xi_i\!\to\O$ such that $|f\big(x_0,\xi_i\big)|\to{\sf D}^\O(f)$\,.
\end{Lemma}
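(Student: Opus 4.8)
The plan is to read the statement as the $\X$-parametrized version of Lemma \ref{gundogdu}: the only genuinely new ingredient is the compactness of $\X$, which enters through the tube lemma, while the identification of $\mathfrak D^\O$ rests on a passage between the partial Fourier transform and the $\Xi$-Fourier coefficients of crossed-product elements.

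First I would record that $\X\!\times\!\beta\Xi$ is compact Hausdorff (hence normal), and that, by the tube lemma applied to the compact factor $\X$ and the compact set $\O$, the sets $\X\!\times\!V$ with $V\in\mathscr V(\O)$ are cofinal in the neighborhood filter of $\X\!\times\!\O$ in $\X\!\times\!\beta\Xi$. This legitimizes the formula ${\sf D}^\O(f)=\underset{V\in\mathscr V(\O)}{\lim}\sup_{\X\times(V\cap\Xi)}|f|$ and lets the argument of Lemma \ref{gundogdu} run verbatim, now inside $\X\!\times\!\beta\Xi$ and with $f=F|_{\X\times\Xi}$, $\X\!\times\!\Xi$ being dense in $\X\!\times\!\beta\Xi$. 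For "$\ge$" in \eqref{gugustiuc}: given $(x,\omega)\in\X\!\times\!\O$ and $\varepsilon>0$, the neighborhood $\{|F|>|F(x,\omega)|-\varepsilon\}\cap(\X\!\times\!V)$ of $(x,\omega)$ meets the dense set $\X\!\times\!\Xi$, hence $\X\!\times\!(V\cap\Xi)$, so $\sup_{\X\times(V\cap\Xi)}|f|\ge|F(x,\omega)|-\varepsilon$ for every $V$. For "$\le$": the closed --- hence compact --- set $\{|F|\ge\|F|_{\X\times\O}\|_\infty+\varepsilon\}$ is disjoint from the closed set $\X\!\times\!\O$, so by normality it is separated from it by an open set which, by the tube lemma, contains some $\X\!\times\!V$ on which $|F|<\|F|_{\X\times\O}\|_\infty+\varepsilon$. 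Letting $\varepsilon\to0$ gives \eqref{gugustiuc}. The last assertion is then immediate: $|F|$ attains its maximum over the compact set $\X\!\times\!\O$ at some $(x_0,\omega_0)$, and choosing a net $\Xi\ni\xi_i\to\omega_0$ (possible since $\Xi$ is dense in $\beta\Xi$) one gets $\xi_i\to\O$ and $|f(x_0,\xi_i)|=|F(x_0,\xi_i)|\to|F(x_0,\omega_0)|={\sf D}^\O(f)$ by continuity of $F(x_0,\cdot)$.

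For the description of $\mathfrak D^\O$, by \eqref{gugustiuc} it suffices to prove $\mathfrak D^\O=\{f\in\mathfrak D\mid F|_{\X\times\O}=0\}$. I would route this through the $\Xi$-Fourier coefficients: for $\eta\in\Xi$ let $E_\eta\colon\mathfrak C\to\ell^\infty(\Xi)$ be the bounded linear map extending $\Psi\mapsto\Psi(\eta)$ from the dense $^*$-subalgebra $\ell^1(\Xi;\ell^\infty(\Xi))$ (pointwise a matrix coefficient of the representation ${\sf sch}$, hence bounded). Writing $f=\mathbf F^{-1}\Psi$ with $\Psi\in\mathfrak C$, and using \eqref{ana} (so in particular that $f\mapsto F$ is continuous $\mathfrak D\to C(\X\!\times\!\beta\Xi)$), a check on $\ell^1(\Xi;\ell^\infty(\Xi))$ together with continuity in $\Psi$ and in $\zeta$ gives $\int_\X\overline{\eta(x)}F(x,\zeta)\,dx=\big[E_\eta(\Psi)\big](\zeta)$ for all $\eta\in\Xi$ and $\zeta\in\beta\Xi$; completeness of the characters of $\X$ then yields $F|_{\X\times\O}=0\iff E_\eta(\Psi)\in\ell^{\infty,\O}(\Xi)$ for every $\eta$. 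It remains to identify $\mathfrak C^\O$ with $\{\Psi\in\mathfrak C\mid E_\eta(\Psi)\in\ell^{\infty,\O}(\Xi)\ \text{for all}\ \eta\in\Xi\}$. The inclusion "$\subseteq$" is clear from continuity of the $E_\eta$. For "$\supseteq$" I would invoke the exactness of the full crossed product applied to $0\to\ell^{\infty,\O}(\Xi)\to\ell^\infty(\Xi)\to C(\O)\to 0$, giving $\mathfrak C^\O=\ker\big(q\colon\mathfrak C\to\Xi\ltimes C(\O)\big)$, and note that $q$ intertwines the maps $E_\eta$ with the Fourier coefficients of $\Xi\ltimes C(\O)$, which are jointly faithful since $\Xi$ is amenable; alternatively one argues directly by Fejér summation over the discrete amenable group $\Xi$ (the Fejér means of $\Psi$ lie in $\ell^1(\Xi;\ell^{\infty,\O}(\Xi))$ and converge to $\Psi$ in $\mathfrak C$). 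Chaining with \eqref{gugustiuc}: $f\in\mathfrak D^\O\iff\Psi\in\mathfrak C^\O\iff F|_{\X\times\O}=0\iff{\sf D}^\O(f)=0$.

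Everything before the last paragraph is soft topology in a compact Hausdorff space. The step I expect to be the real obstacle is the reverse inclusion $\mathfrak D^\O\supseteq\{f\in\mathfrak D\mid{\sf D}^\O(f)=0\}$. The difficulty is structural: the partial Fourier transform does not turn the crossed-product multiplication into pointwise multiplication, so $\mathfrak D$ is not a function algebra on $\X\!\times\!\beta\Xi$ under pointwise operations and membership in the ideal $\mathfrak D^\O$ cannot be read off from the Gelfand picture; the return to the $C^*$-structure must be made through the $\Xi$-Fourier coefficients, and it is there that amenability of $\Xi$ does the work.
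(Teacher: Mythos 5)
Your proof is correct, and where the paper actually writes out an argument --- the existence of the point $x_0$ and the net $\xi_i\to\O$ via maximizing $|F|$ on the compact set $\X\!\times\!\O$ and approximating by density --- you do exactly the same thing. The difference is in weighting: the paper declares the identity \eqref{gugustiuc} and the description of $\mathfrak D^\O$ ``obvious'' and gives no argument, whereas you (rightly) observe that only the topological identity is soft (your tube-lemma/normality argument is a clean justification of the paper's use of the filter $\{\X\times V\}$), while the reverse inclusion $\{f\in\mathfrak D\mid{\sf D}^\O(f)=0\}\subseteq\mathfrak D^\O$ has genuine $C^*$-algebraic content, since $\mathfrak D$ is not closed under pointwise multiplication and membership in $\mathfrak D^\O=\mathbf F^{-1}\big(\Xi\ltimes\ell^{\infty,\O}(\Xi)\big)$ cannot be read off the Gelfand picture. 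Your route through the Fourier coefficients $E_\eta$ (bounded as matrix coefficients of ${\sf sch}$), the identity $\int_\X\overline{\eta(x)}F(x,\zeta)\,dx=[E_\eta(\Psi)](\zeta)$, and then either exactness of the crossed product together with joint faithfulness of the Fourier coefficients of $\Xi\ltimes C(\O)$, or Fej\'er summation over the amenable discrete group $\Xi$, is the standard and correct way to close this; it supplies a justification the paper omits rather than contradicting it. The only caveat is that this machinery is not needed for anything the paper subsequently uses from this lemma (Theorem \ref{titus} only needs the net and the forward inclusion), which is presumably why the author felt free to suppress it.
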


\begin{proof}
The first statement is obvious. For the last statement, we choose a point $(x_0,\xi_0)$ of the compact set $\X\!\times\!\O$ for which $\p\!F|_{\X\times\O}\Vert_\infty=F\big(x_0,\xi_0\big)$\,, we approximate (by density) $\xi_0$ by elements of $\Xi$ and then use the continuity of $F$ as well as \eqref{gugustiuc}.
\end{proof}

\begin{Remark}\label{cicek}
{\rm Let $f\in\mathfrak D^\O$. Since $\O$ is supposed to be invariant, for every $\eta\in\Xi$ one has 
\begin{equation*}\label{aykiz}
\limsup_{(x,\xi)\to\X\times\O}|f(x,\eta\xi)|=0\,.
\end{equation*}
Of course, this is the same as
\begin{equation*}\label{naykiz}
\limsup_{\xi\to\O}\sup_{x\in\X}|f(x,\eta\xi)|=0\,.
\end{equation*}
}
\end{Remark}

We are going to introduce now the class of symbols for which our main result holds.

\begin{Definition}\label{bitinia}
\begin{enumerate}
\item[(a)]
For  functions $\psi:\Xi\to\mathbb C$\,, $f:\X\!\times\!\Xi\to\mathbb C$ and elements $\zeta\in\Xi$ we set
\begin{equation*}\label{alp}
{\sf osc}^\zeta_\psi(\xi):=\psi(\xi\zeta)-\psi(\xi)\,,
\end{equation*}
\begin{equation*}\label{kalp}
{\sf OSC}^\zeta_f(x,\xi):=f(x,\xi\zeta)-f(x,\xi)\,.
\end{equation*}
\item[(b)]
One defines the sets of "vanishing oscillation functions in the direction $\O$"
\begin{equation*}\label{konya}
{\sf vo}^\O(\Xi):=\big\{\psi\in\ell^\infty(\Xi)\,\big\vert\,{\sf osc}^\zeta_\psi\!\in\ell^{\infty,\O}(\Xi)\,,\,\forall\,\zeta\in\Xi\big\}\,,
\end{equation*}
\begin{equation*}\label{erzurum}
{\sf VO}^\O(\X\!\times\!\Xi):=\big\{f\in\mathfrak D\,\big\vert\,{\sf OSC}^\zeta_f\!\in\mathfrak D^\O,\,\forall\,\zeta\in\Xi\big\}\,.
\end{equation*}
\end{enumerate}
\end{Definition}

\begin{Remark}\label{aktogali}
{\rm The map $\ell^\infty(\Xi)\ni\psi\to{\sf osc}^\zeta_\psi\in\ell^\infty(\Xi)$ is linear, it satisfies $\overline{{\sf osc}^\zeta_\psi}={\sf osc}^\zeta_{\overline\psi}$\;,
\begin{equation*}\label{vasilius}
{\sf osc}^\zeta_{\psi_1\psi_2}\!=\psi_2(\cdot\zeta){\sf osc}^\zeta_{\psi_1}\!+\psi_1{\sf osc}^\zeta_{\psi_2}\,,
\end{equation*}
\begin{equation*}\label{constantin}
{\sf osc}^\zeta_{\theta_\eta\psi}\!=\theta_\eta{\sf osc}^\zeta_\psi\,,\quad\forall\,\eta\in\Xi
\end{equation*}
and $\big\Vert{\sf osc}^\zeta_{\psi}\big\Vert_\infty\!\le 2\big\Vert\psi\big\Vert_\infty$\,.
Using this and the fact that $\ell^{\infty,\O}(\Xi)$ is an ideal in $\ell^{\infty}(\Xi)$ which is invariant under $\theta$-translations, it follows easily that $\ell^{\infty,\O}(\Xi)$ is an ideal in the $C^*$-subalgebra ${\sf vo}^\O(\Xi)$ of $\ell^\infty(\Xi)$\,. This $C^*$-algebra is unital and invariant under translations. It can also be defined by the condition
\begin{equation*}\label{niceea}
\lim_{\xi\to\O}|{\sf osc}|^M_\psi(\xi):=\lim_{\xi\to\O}\max_{\zeta\in M}|\psi(\xi\zeta)-\psi(\xi)|=0\,,\quad\forall\,M\subset\Xi\,,\,\#M<\infty\,.
\end{equation*}
This is why we name the operation "oscillation"; for $\#M=1$ this also deserves the name "difference operation".}
\end{Remark}

\begin{Remark}\label{aktogaly}
{\rm Similar statements hold for ${\sf OSC}^\zeta$. It is defined a priori on $C(\Xi)\!\otimes\ell^\infty(\Xi)\cong C(\X\!\times\!\beta\Xi)$\,, so it can be restricted to $\mathfrak D$. One shows that
\begin{equation*}\label{batuhan}
{\sf VO}^\O(\X\!\times\!\Xi)=\big(\mathcal F^{-1}\!\otimes{\rm id}\big)\big[\Xi\!\ltimes\!{\sf vo}^\O(\Xi)\big]
\end{equation*}
and that $\mathfrak D^\O$ is an ideal in this $C^*$-algebra (since the crossed product functor is exact \cite{Wi}, but this also follows from the initial definition). The particular case ${\sf VO}^\emptyset(\X\!\times\!\Xi)=\mathfrak D$ is only interesting as a sort of "universe", on which ${\sf Op}$ surely applies. Since ${\sf VO}^\O(\X\!\times\!\Xi)$ is the family of symbols for which our result applies, we make its defining condition more explicit:
\begin{equation*}\label{hachaturian}
\limsup_{(x,\xi)\to\X\times\O}|f(x,\xi\zeta)-f(x,\xi)|=0\,,\quad\forall\,\zeta\in\Xi\,.
\end{equation*}}
\end{Remark}

\section{Gohberg's Lemma}\label{bergman}

We now define the operator (represented) versions of the symbol classes introduced above. One sets 
\begin{equation}\label{sah}
\mathbb D^\O\equiv\big\{C(\X)\cdot \ell^{\infty,\O}(\Xi)\big\}:={\sf Op}\big(\mathfrak D^\O\big)\subset\mathbb B(\H)\,.
\end{equation}
It is an ideal in $\mathbb D^\emptyset\!=:\mathbb D\equiv\big\{C(\X)\cdot \ell^{\infty}(\Xi)\big\}:={\sf Op}\big(\mathfrak D\big)$ (but not in $\mathbb B(\H)$\,, of course). The reason of the notation in \eqref{sah} is that $\mathbb D^\O$ is generated by products of the form $\varphi(Q)\psi(P)$\,, with $\varphi\in C(\X)$ and $\psi\in\ell^{\infty,\O}(\Xi)$ (so $\mathbb D^\O$ is just the closure in $\mathbb B(\H)$ of the subspace generated by the products). From the diagram \eqref{diagramm} we  see that $\mathbb D^\O$ is unitarily equivalent with the ideal ${\sf sch}\big(\mathfrak C^\O\big)$ of the $C^*$-algebra ${\sf sch}\big(\mathfrak C\big)\subset\mathbb B\big[\ell^2(\Xi)\big]$\,. 

\smallskip
Similarly, one introduces 
\begin{equation*}\label{padisah}
\mathbb{VO}^\O\equiv\big\{C(\X)\cdot{\sf vo}^{\O}(\Xi)\big\}:={\sf Op}\big[{\sf VO}^\O(\X\!\times\!\Xi)\big]\subset\mathbb B(\H)\,.
\end{equation*}
In terms of crossed products, one can write
\begin{equation*}\label{padimat}
\mathbb{VO}^\O=\mathbb U_\mathcal F\big[{\sf sch}\big(\Xi\!\ltimes\!{\sf vo}^\O(\Xi)\big)\big]\,.
\end{equation*}
The pair formed of the $C^*$-algebra $\mathbb{VO}^\O$ and its ideal $\mathbb D^\O$ will play a central role. Note that the quotient $\mathbb{VO}^\O/\mathbb D^\O$ identifies to a $C^*$-subalgebra of $\mathbb D/\mathbb D^\O$. The canonical quotient epimorphism $\pi_\O:\mathbb D\to\mathbb D_\O:=\mathbb D/\mathbb D^\O$ restricts precisely to the canonical quotient epimorphism $\mathbb{VO}^\O\to\mathbb{VO}^\O/\mathbb D^\O$.

\begin{Theorem}\label{titus}
Let $f\in{\sf VO}^\O(\X\!\times\!\Xi)$\,. Then 
\begin{equation*}\label{petruccio}
\big\Vert\,\pi_\O\big[{\sf Op}(f)\big]\,\big\Vert_{\mathbb D_\O}\!={\rm dist}\big({\sf Op}(f),\mathbb D^\Omega\big)\ge {\sf D}^\O(f)\,.
\end{equation*}
\end{Theorem}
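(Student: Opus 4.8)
The plan is to reduce the statement to the Schrödinger picture and then construct, for a well-chosen net $\xi_i\to\O$, a sequence of almost-normalized test vectors in $\ell^2(\Xi)$ on which $\mathsf{sch}$ of (the crossed-product preimage of) $f$ essentially reproduces $f(x_0,\xi_i)$, while every element of the ideal $\mathfrak C^\O$ is asymptotically killed. Since $\pi_\O[\mathsf{Op}(f)]$ has norm equal to the distance to $\mathbb D^\O$, and by the unitary equivalence in diagram \eqref{diagramm} this equals $\mathrm{dist}\big(\mathsf{sch}(\Psi),\mathsf{sch}(\mathfrak C^\O)\big)$ where $f=(\mathcal F^{-1}\!\otimes{\rm id})(\Psi)$ and $\Psi\in\Xi\ltimes\mathsf{vo}^\O(\Xi)$, it suffices to bound $\big\Vert\mathsf{sch}(\Psi)w_i - \mathsf{sch}(\Phi)w_i\big\Vert$ from below for any $\Phi\in\mathfrak C^\O$, uniformly, by something tending to ${\sf D}^\O(f)$.

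First I would apply Lemma \ref{gumustekin} to obtain $x_0\in\X$ and a net $\Xi\ni\xi_i\to\O$ with $|f(x_0,\xi_i)|\to{\sf D}^\O(f)$. The natural test vectors are (normalized) indicators $w_i=\delta_{\xi_i}$, or better, characters-type vectors adapted to $x_0$; acting with $\mathsf{op}(f)$ as in \eqref{cavdar} on $\delta_{\xi_i}$ gives $\big[\mathsf{op}(f)\delta_{\xi_i}\big](\xi)=\int_\X x(\xi\xi_i^{-1})f(x^{-1},\xi_i)\,dx$, whose $\ell^2$-norm in $\xi$ is $\big(\sum_\xi|\widehat{f(\cdot,\xi_i)}(\xi\xi_i^{-1})|^2\big)^{1/2}=\Vert f(\cdot,\xi_i)\Vert_{L^2(\X)}$, which need not be close to $\sup_x|f(x,\xi_i)|$. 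So instead I would localize in the $x$-variable: pick a small neighborhood of $x_0$ on which $|f(x,\xi_i)|$ is close to $|f(x_0,\xi_i)|$ — here the vanishing-oscillation hypothesis is used in the $\xi$-variable, but continuity in $x$ (from $\mathfrak D\subset C(\X\times\beta\Xi)$) handles the $x$-variable — and use as test vector the inverse Fourier transform of a normalized bump in $x$ supported near $x_0$, suitably translated in $\Xi$ by $\xi_i$. Then $\langle \mathsf{Op}(f)u_i,u_i\rangle$ approaches $f(x_0,\xi_i)$ up to errors controlled by the oscillation of $f(x,\cdot)$ over the (finite, fixed) support of $\widehat{u_i}$, which by Remark \ref{aktogaly} and Definition \ref{bitinia} tends to $0$ along $\xi_i\to\O$.

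The second ingredient is that any $\Phi\in\mathfrak C^\O$ is asymptotically negligible on these vectors: writing $\mathsf{sch}(\Phi)$ via \eqref{arabi} and using that each $\Phi(\eta,\cdot)\in\ell^{\infty,\O}(\Xi)$, i.e. $\limsup_{\zeta\to\O}|\Phi(\eta,\zeta)|=0$ by Lemma \ref{gundogdu}, one gets that the matrix coefficients of $\mathsf{sch}(\Phi)$ against vectors concentrated (after translation by $\xi_i$) near $\O$ go to $0$; a density/approximation argument in $\ell^1(\Xi;\ell^\infty(\Xi))$ reduces the general $\Phi$ to finitely-supported-in-$\eta$ ones, for which this is immediate. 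Combining, $\big\Vert\mathsf{Op}(f)-K\big\Vert\ge\limsup_i|\langle(\mathsf{Op}(f)-K)u_i,u_i\rangle|\ge\limsup_i|f(x_0,\xi_i)|={\sf D}^\O(f)$ for every $K\in\mathbb D^\O$, which is the claim.

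The main obstacle I anticipate is the joint localization: making the test vector concentrated \emph{both} near $x_0\in\X$ in the quantized $Q$-variable \emph{and} near $\O\subset\delta\Xi$ in the $P$-variable, while keeping it normalized and keeping the commutator-type errors (from moving $f(x,\xi)$ past the bump and from the noncommutativity of $\varphi(Q)$ and $\psi(P)$) under control along the net. The vanishing-oscillation condition is exactly what tames the $P$-side error — the difference ${\sf OSC}^\zeta_f$ lying in $\mathfrak D^\O$ means $f(x,\xi\zeta)\approx f(x,\xi)$ uniformly in $x$ as $\xi\to\O$, for each fixed $\zeta$ in the finite frequency-support of the bump — so the estimate should close, but bookkeeping the two localization scales simultaneously is the delicate point.
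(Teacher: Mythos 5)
Your proposal follows essentially the same route as the paper's proof: the same net $(x_0,\xi_i)$ supplied by Lemma \ref{gumustekin}, the same modulated--translated test vectors $u_i={\sf E}(x_0,\xi_i)u$, the same two asymptotic facts (elements of $\mathbb D^\O$ are annihilated on the $u_i$, which is Proposition \ref{gokce}, and ${\sf Op}(f)u_i$ is asymptotically $f(\cdot,\xi_i)u_i$, which is Lemma \ref{wildemir}), and the same final localization of $u$ near $x_0$ using the continuity of $f$ in $x$ uniformly in $\xi$ coming from \eqref{ana}. Passing to matrix coefficients $\langle({\sf Op}(f)-K)u_i,u_i\rangle$ instead of norms $\Vert({\sf Op}(f)-K)u_i\Vert$ at the end is a harmless variant. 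Your treatment of the ideal in the ${\sf sch}$-picture (reduce to $\Phi$ finitely supported in $\eta$ by $\ell^1$-density) is equivalent to the paper's reduction to finite sums of products $\varphi(Q)\psi(P)$.

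The one step you leave open --- and which, as literally written, does not go through --- is the claim that the error in ${\sf Op}(f)u_i\approx f(\cdot,\xi_i)u_i$ is controlled by the oscillation of $f$ over ``the finite, fixed support of $\widehat{u_i}$'': a vector supported in a small neighbourhood of $x_0$ will in general not have finitely supported Fourier transform (on $\T^N$ a nonzero such $u$ never does), so for the $x$-localized test vectors the frequency support is infinite and the pointwise vanishing-oscillation condition cannot be invoked directly. The paper closes this gap in a way that is simpler than the ``two competing localization scales'' you anticipate: Lemma \ref{wildemir} first proves $\Vert{\sf Op}(f)u_i-f(\cdot,\xi_i)u_i\Vert\to0$ only for $u=u^\zeta=\mathcal F^{-1}\delta_\zeta$ a single character, where the computation is exact and produces precisely $\xi_i(x)\zeta(xx_0^{-1}){\sf OSC}^\zeta_f(x,\xi_i)$, whose $L^2$-norm is dominated by $\sup_x|{\sf OSC}^\zeta_f(x,\xi_i)|\to0$ by the hypothesis $f\in{\sf VO}^\O(\X\!\times\!\Xi)$. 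Since the operators $\big({\sf Op}(f)-f(\cdot,\xi_i)\big){\sf E}(x_0,\xi_i)$ are uniformly bounded, strong convergence to zero on the total set of characters extends to all $u\in\H$, in particular to the $x$-localized $u$ needed afterwards. There is thus no interaction between the two localizations: the frequency truncation is absorbed once and for all into this density argument, while the $x$-localization is chosen independently and depends only on $\epsilon$ through the neighborhood $U$ in \eqref{abdurrahman}. With that one density step inserted, your argument is complete and coincides with the paper's.
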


The line of the proof is inspired by \cite{DR}. We need first some preliminary results. 

\begin{Proposition}\label{gokce}
For $u\in\H=L^2(\X)$ and $i\in I$ (directed set) let us define 
\begin{equation}\label{haymana}
u_i(x)\equiv\big[{\sf E}(x_0,\xi_i)u\big](x):=\xi_i(x)u\big(xx_0^{-1}\big)\,.
\end{equation} 
We assume that $x_0\in\X$ and $\xi_i\to\O$\,. If $\,{\sf L}\in\mathbb D^\Omega$, then ${\sf L}u_i\to 0$ in norm.
\end{Proposition}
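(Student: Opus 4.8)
The plan is to reduce the statement to elementary symbols, move to the Fourier side, and finish with a dominated–convergence argument carried out over the directed set $I$.

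First I would record that each operator ${\sf E}(x_0,\xi_i)$ is unitary on $\H$: it is the composition of multiplication by the character $\xi_i$ (of modulus one) with the translation by $x_0$ (the Haar measure being invariant), so $\|u_i\|=\|u\|$ and hence $\|{\sf L}u_i\|\le\|{\sf L}\|_{\mathbb B(\H)}\|u\|$ uniformly in $i$. By the description of $\mathbb D^\Omega$ given right after \eqref{sah} — it is the closed linear span of the operators $\varphi(Q)\psi(P)$ with $\varphi\in C(\X)$ and $\psi\in\ell^{\infty,\O}(\Xi)$ — a routine $3\varepsilon$-argument (it is harmless to assume $u\neq 0$) reduces the claim to ${\sf L}=\varphi(Q)\psi(P)$. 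Since $\|\varphi(Q)\|\le\|\varphi\|_\infty<\infty$, it then suffices to prove that $\|\psi(P)u_i\|\to 0$ for every $\psi\in\ell^{\infty,\O}(\Xi)$.

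Next I would compute on the Fourier side. A change of variables in $(\mathcal F u_i)(\xi)=\int_\X\overline{\xi(x)}\,\xi_i(x)u(xx_0^{-1})dx$ gives $\widehat{u_i}(\xi)=\overline{(\xi\xi_i^{-1})(x_0)}\,\widehat u\big(\xi\xi_i^{-1}\big)$, in particular $|\widehat{u_i}(\xi)|=|\widehat u(\xi\xi_i^{-1})|$. Since $\psi(P)=\mathcal F^{-1}\psi(Q_\Xi)\mathcal F$, Plancherel's theorem yields
\[
\|\psi(P)u_i\|^2=\sum_{\xi\in\Xi}|\psi(\xi)|^2\,|\widehat u(\xi\xi_i^{-1})|^2=\sum_{\eta\in\Xi}|\psi(\eta\xi_i)|^2\,|\widehat u(\eta)|^2 .
\]

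Finally I would show this sum tends to $0$. For a fixed $\eta\in\Xi$, the translation $\zeta\mapsto\zeta\eta$ extends to a homeomorphism of $\beta\Xi$ carrying the invariant set $\O$ onto itself; hence $\xi_i\to\O$ forces $\eta\xi_i\to\O$, and Lemma \ref{gundogdu} (which identifies $\ell^{\infty,\O}(\Xi)$ with the functions satisfying $\limsup_{\xi\to\O}|\psi(\xi)|=0$) gives $|\psi(\eta\xi_i)|\to 0$. The summands are dominated by $\eta\mapsto\|\psi\|_\infty^2|\widehat u(\eta)|^2$, which is summable with sum $\|\psi\|_\infty^2\|u\|^2$. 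So, given $\varepsilon>0$, I would pick a finite $F\subset\Xi$ with $\sum_{\eta\notin F}|\widehat u(\eta)|^2<\varepsilon$: the tail of the sum stays below $\|\psi\|_\infty^2\varepsilon$ for all $i$, while on the finite set $F$ each of the finitely many terms tends to $0$, so the partial sum over $F$ is eventually $<\varepsilon$. Combined with the first paragraph this proves ${\sf L}u_i\to 0$. The only point needing a little care — and the only plausible obstacle — is that the limit is taken along the directed set $I$ rather than along $\mathbb N$, so dominated convergence must be run "by hand" via the finite/tail splitting just indicated; everything else is bookkeeping.
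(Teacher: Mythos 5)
Your proof is correct and follows essentially the same route as the paper: reduce to generators $\varphi(Q)\psi(P)$, conjugate by $\mathcal F$ to turn ${\sf E}(x_0,\xi_i)$ into a translation of the multiplication variable, and conclude from the $\Th$-invariance of $\O$ that $\psi(\eta\xi_i)\to 0$ pointwise. Your explicit finite/tail splitting over the directed set is just the spelled-out form of the paper's appeal to the strong convergence of the multiplication operators $\psi(\xi_i Q_\Xi)$ to zero.
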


\begin{proof}
Since ${\sf L}$ can be approximated in norm by sums of products $\varphi(Q)\psi(P)$\,, with $\varphi\in C(\X)$ and $\psi\in\ell^{\infty,\O}(\Xi)$\,, it is enough to show that $\psi(P){\sf E}(x_0,\xi_i)=\mathcal F^{-1}\psi\big(Q_\Xi\big)\mathcal F\,{\sf E}(x_0,\xi_i)$ converges strongly to zero.
A simple computation shows that (operators in $\ell^2(\Xi)$)
\begin{equation*}\label{turgut}
\psi\big(Q_\Xi\big)\mathcal F\,{\sf E}(x_0,\xi_i)=\xi_i(x_0)x_0^{-1}(\Q_\Xi)\psi(Q_\Xi)T_{\xi_i^{-1}}\mathcal F\,,
\end{equation*}
where $x_0(Q_\Xi)$ is the (unitary) operator of multiplication with the function $\xi\to\xi\big(x_0^{-1}\big)\equiv x_0^{-1}(\xi)$ and $T_{\xi_i^{-1}}$ represents the translation by $\xi^{-1}_i$. So it is enough to show that $\psi(Q_\Xi)T_{\xi_i^{-1}}\!=T_{\xi_i^{-1}}\psi\big(\xi_i Q_\Xi)$ converges strongly to zero\,. By Remark \ref{cicek}, for every $\xi\in\Xi$ we have $\psi\big(\xi_i\xi\big)\to 0$\,, which implies the strong convergence to zero of the multiplication operator $\psi\big(\xi_i Q_\Xi\big)$ and the proof is finished.
\end{proof}

The next result makes an asymptotic connection between the pseudodifferential operator and a multiplication operator defined by its symbol. Its proof will put into evidence the oscillation of this symbol.

\begin{Lemma}\label{wildemir}
For $f\in{\sf VO}^\O(\X\!\times\!\Xi)$\,, $u\in\H$ and $u_i$ given by \eqref{haymana} one has
\begin{equation}\label{bamse}
\big\Vert{\sf Op}(f)u_i-f\big(\cdot,\xi_i\big)u_i\big\Vert\to 0\,.
\end{equation}
\end{Lemma}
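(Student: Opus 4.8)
The plan is to reduce to the case of elementary symbols $f = \varphi\otimes\psi$ with $\varphi\in C(\X)$ and $\psi\in{\sf vo}^\O(\Xi)$ (more precisely, finite sums of such, which are dense in ${\sf VO}^\O(\X\!\times\!\Xi)$ in the $\mathfrak D$-norm, hence in the operator norm via ${\sf Op}$), and then to compute ${\sf Op}(\varphi\otimes\psi)u_i$ explicitly, extracting exactly the oscillation term that Definition \ref{bitinia} controls. First I would note that both maps $f\mapsto{\sf Op}(f)u_i$ and $f\mapsto f(\cdot,\xi_i)u_i$ are uniformly bounded in $i$ as operators from (the operator closure of) ${\sf VO}^\O(\X\!\times\!\Xi)$ to $\H$: the first because $\|{\sf Op}(f)\|\le\|f\|_{\mathfrak D}$ and $\|u_i\|=\|u\|$, the second because $\|f(\cdot,\xi_i)\|_\infty\le\|F\|_\infty\le\|f\|_{\mathfrak D}$ (using \eqref{ana}). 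So it suffices to prove \eqref{bamse} on a dense subset, and elementary tensors span such a subset.

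For $f=\varphi\otimes\psi$ one has ${\sf Op}(f)=\varphi(Q)\psi(P)$, and $f(\cdot,\xi_i)u_i=\psi(\xi_i)\,\varphi(Q)u_i=\psi(\xi_i)\varphi\cdot u_i$. Thus the difference is $\varphi(Q)\big[\psi(P)u_i-\psi(\xi_i)u_i\big]$, and since $\|\varphi(Q)\|=\|\varphi\|_\infty$ it is enough to show $\|\psi(P)u_i-\psi(\xi_i)u_i\|\to 0$. Passing to the Fourier side, $\mathcal F u_i = T_{\xi_i}\mathcal F\big(x_0(Q)u\big)$ up to the unimodular scalar $\xi_i(x_0)$ coming from \eqref{haymana} — the same computation already performed in the proof of Proposition \ref{gokce} — so that, writing $w:=\mathcal F\big(x_0(\cdot)u\big)\in\ell^2(\Xi)$, the quantity to estimate becomes
\begin{equation*}
\big\|\psi(Q_\Xi)T_{\xi_i}w-\psi(\xi_i)T_{\xi_i}w\big\|_{\ell^2(\Xi)}
=\big\|\big(\psi(\xi_i\,\cdot\,)-\psi(\xi_i)\big)w\big\|_{\ell^2(\Xi)}\,,
\end{equation*}
using $T_{\xi_i}^{-1}\psi(Q_\Xi)T_{\xi_i}=\psi(\xi_i Q_\Xi)$ and unitarity of $T_{\xi_i}$. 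Now $\big(\psi(\xi_i\zeta)-\psi(\xi_i)\big)$, as a function of $\zeta$, is precisely ${\sf osc}^{\zeta}_\psi(\xi_i)$ viewed the other way around; for each fixed $\zeta\in\Xi$ it tends to $0$ as $i\to\infty$ because $\xi_i\to\O$ and ${\sf osc}^{\zeta}_\psi\in\ell^{\infty,\O}(\Xi)$ (Lemma \ref{gundogdu}). Since also $\big|\psi(\xi_i\zeta)-\psi(\xi_i)\big|\le 2\|\psi\|_\infty$ uniformly and $w\in\ell^2(\Xi)$, dominated convergence on the discrete measure space $\Xi$ gives the claim.

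The step I expect to require the most care is the dominated-convergence argument in the last display: the "pointwise" convergence is pointwise in the variable $\zeta$ for each fixed $i$-tail, i.e. one must check that for every $\zeta$, $\big(\psi(\xi_i\zeta)-\psi(\xi_i)\big)\to 0$ in $i$, and then invoke the $\ell^2$-dominated convergence theorem with dominating function $2\|\psi\|_\infty\,|w(\cdot)|\in\ell^2(\Xi)$. This is clean for elementary symbols; the only genuine subtlety is making sure the reduction to elementary symbols is legitimate, i.e. that ${\sf VO}^\O(\X\!\times\!\Xi)=\big(\mathcal F^{-1}\!\otimes{\rm id}\big)\big[\Xi\!\ltimes\!{\sf vo}^\O(\Xi)\big]$ (Remark \ref{aktogaly}) really does contain $C(\X)\odot{\sf vo}^\O(\Xi)$ as a dense subspace in its own $C^*$-norm — which follows from the structure of the crossed product together with density of $\ell^1(\Xi;{\sf vo}^\O(\Xi))$ and of finite tensors therein. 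Once that is in hand, the uniform bound from the first paragraph closes the approximation and the lemma follows.
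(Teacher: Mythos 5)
Your computation for elementary symbols $f=\varphi\otimes\psi$ is correct, and approximating the \emph{symbol} rather than the \emph{vector} is a genuinely different route from the paper's. But the reduction step has a real gap: the uniform bound for the multiplication term rests on the inequality $\Vert F\Vert_\infty\le\Vert f\Vert_{\mathfrak D}$, and this is false — the inclusion \eqref{ana} is not contractive, nor even bounded (the paper itself warns that the two norms are different). Concretely, take $\X=\T$, $\Xi=\ZZ$, $\psi=\delta_0\in c_0(\ZZ)\subset{\sf vo}^\O(\ZZ)$ and $\varphi$ a smooth bump with $\Vert\varphi\Vert_\infty=1$ and small support. Then ${\sf Op}(\varphi\otimes\delta_0)$ is the rank-one operator $u\mapsto\widehat u(0)\,\varphi$, and since $\Xi$ is amenable and ${\sf sch}$ is faithful on the reduced crossed product, $\Vert\varphi\otimes\delta_0\Vert_{\mathfrak D}=\Vert{\sf Op}(\varphi\otimes\delta_0)\Vert_{\mathbb B(\H)}=\Vert\varphi\Vert_{L^2}$, which can be made arbitrarily small while $\sup_{x,\xi}|(\varphi\otimes\delta_0)(x,\xi)|=\Vert\varphi\Vert_\infty=1$. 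Hence convergence $f^{(n)}\to f$ in the $\mathfrak D$-norm gives no control on $\sup_i\Vert(f-f^{(n)})(\cdot,\xi_i)u_i\Vert$, and your $\epsilon/3$ argument does not close. Approximating instead in the $\ell^1\big(\Xi;\ell^\infty(\Xi)\big)$-norm (which does dominate both the $C^*$-norm and the sup-norm) only proves the lemma on the dense subalgebra $\mathbf F^{-1}\ell^1\big(\Xi;{\sf vo}^\O(\Xi)\big)$, not on all of ${\sf VO}^\O(\X\!\times\!\Xi)$.

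The gap disappears if you dualize the approximation, which is what the paper does: keep $f\in{\sf VO}^\O(\X\!\times\!\Xi)$ fixed and let $u$ run over the total set of characters $u^\zeta=\mathcal F^{-1}\delta_\zeta$. The maps $u\mapsto\big({\sf Op}(f)-f(\cdot,\xi_i)\big){\sf E}(x_0,\xi_i)u$ are linear and uniformly bounded in $i$ by $\Vert{\sf Op}(f)\Vert+\Vert F\Vert_\infty$ — a finite constant for the \emph{fixed} $f$, which is all one needs — so it suffices to check \eqref{bamse} on this total set; there a one-line computation gives $\big[\big({\sf Op}(f)-f(\cdot,\xi_i)\big)u^\zeta_i\big](x)=\xi_i(x)\zeta\big(xx_0^{-1}\big)\,{\sf OSC}^\zeta_f(x,\xi_i)$, whose $L^2$-norm is at most $\sup_{x}\big|{\sf OSC}^\zeta_f(x,\xi_i)\big|\to 0$ by the definition of ${\sf VO}^\O$. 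Note that this is the same oscillation your $\ell^2$-dominated-convergence step isolates, only read off on the $\X$ side, where no density argument in $f$ is required.
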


\begin{proof} 
An easy argument shows that it is enough to prove \eqref{bamse} for $u$ belonging to a total subset $\H_0$ of $\H$ (meaning that the subspace generated by $\H_0$ is dense in $\H$)\,.
Such a set is formed of the elements $u^{\zeta}:=\mathcal F^{-1}\delta_\zeta$\,, with $\zeta\in\Xi$\,, for which 
\begin{equation*}\label{segurtekin}
u^\zeta_i(x)=\xi_i(x)\zeta\big(xx_0^{-1}\big)\,,\quad\forall\,\zeta\in\Xi\,,\,i\in I.
\end{equation*} 
By a direct computation
\begin{equation*}\label{tuktegin}
\begin{aligned}
\big[\big({\sf Op}(f)-f(\cdot,\xi_i)\big)u^\zeta_i\big](x)&=\xi_i(x)\zeta\big(xx_0^{-1}\big)\big[f(x,\xi_i\zeta)-f(x,\xi_i)\big]\\
&=\xi_i(x)\zeta\big(xx_0^{-1}\big){\rm OSC}_f^\zeta(x,\xi_i)
\end{aligned}
\end{equation*}
and it follows that
\begin{equation*}\label{ural}
\begin{aligned}
\big\Vert\big({\sf Op}(f)-f(\cdot,\xi_i)\big)u^\zeta_i\big\Vert^2&=\int_\X\big\vert\xi_i(x)\zeta\big(xx_0^{-1}\big){\rm OSC}_f^\zeta(x,\xi_i)\big\vert^2 dx\\
&=\int_\X\big\vert{\rm OSC}_f^\zeta(x,\xi_i)\big\vert^2 dx\\
&\le\sup_{x\in\X}\vert{\rm OSC}_f^\zeta(x,\xi_i)\big\vert^2\!\underset{n\to\infty}{\longrightarrow}0\,,
\end{aligned}
\end{equation*}
since $\m(\X)=1$ and $\xi_i\to\O$\,.
\end{proof}

\begin{proof} {\it of Theorem \ref{titus}.}
One must show that 
\begin{equation*}\label{cardash}
\p\!{\sf Op}(f)-{\sf L}\!\p\,\ge{\sf D}^\O(f)\quad{\rm for\ every}\ {\sf L}\in\mathbb D^\Omega.
\end{equation*}

\smallskip
Let $\big(x_0,\xi_i\big)\to\X\!\times\!\O$ such that $|f\big(x_0,\xi_i\big)|\to{\sf D}^\O(f)$ (by Lemma \ref{gumustekin}) and, for $u\in\H\!\setminus\!\{0\}$\,, define $u_i$ by \eqref{haymana}. Note that $\p\!u_i\!\p\,=\,\p\!u\!\p$\,. We deduce from Lemma \ref{wildemir} that for every $\epsilon>0$ there is $N\in\N$ such that for every $n\ge N$
\begin{equation}\label{tramse}
\big\Vert f\big(\cdot,\xi_i\big)u_i\big\Vert-\big\Vert{\sf Op}(f)u_i\big\Vert\le\epsilon\p\!u\!\p.
\end{equation}
Since  by \eqref{ana} $f$ is continuous in $x$ uniformly in $\xi$\,, there is a neighborhood $U$ of $x_0$ such that 
\begin{equation}\label{abdurrahman}
|f(x,\xi_i)-f(x_0,\xi_i)|\le\epsilon\,,\quad\forall\,x\in U,\,i\in I.
\end{equation}
Assuming now that $u\ne 0$ and $\supp u\subset Ux_0^{-1}$, i.\,e. $\supp u_i\subset U$, it follows from \eqref{abdurrahman} that
\begin{equation}\label{goncagul}
\big\vert f(x_0,\xi_i)\big\vert\p\!u_i\!\p-\big\Vert f\big(\cdot,\xi_i\big)u_i\big\Vert\le\epsilon\p\!u\!\p\,,\quad\forall\,i\in I.
\end{equation}
As a consequence of Proposition \ref{gokce} and \eqref{tramse}, \eqref{goncagul}, for $i$ big enough
$$
\begin{aligned}
\p\!{\sf Op}(f)-{\sf L}\!\p\p\!u\!\p&=\,\p\!{\sf Op}(f)-{\sf L}\!\p \p\!u_i\!\p\\
&\ge\,\p\!\big({\sf Op}(f)-{\sf L}\big)u_i\!\p\\
&\ge\,\p\!{\sf Op}(f)u_i\!\p-\p\!{\sf L}u_i\!\p\\
&\ge\,|f(x_0,\xi_i)|\p\!u\!\p-3\epsilon\p\!u\!\p\,,
\end{aligned}
$$
implying that
\begin{equation}\label{hamza}
\p\!{\sf Op}(f)-{\sf L}\!\p\,\ge\,|f(x_0,\xi_i)|-3\epsilon\underset{n\to\infty}{\longrightarrow}{\sf D}^\O(f)-3\epsilon\,.
\end{equation}
The proof is finished, since $\epsilon$ is arbitrary.
\end{proof}


\section{Some final remarks}\label{hafsa}

\begin{Remark}\label{actogali}
{\rm We obtained in \eqref{cavdar} and \eqref{bazar} the relationship ${\sf Op}_\Xi^R(g)={\sf op}\big(g\circ\mu^{-1}\big)$\,, with
\begin{equation*}\label{turkey}
\mu^{-1}:\X\!\times\!\Xi\to\Xi\!\times\!\X\,,\quad\mu^{-1}(x,\xi):=\big(\xi,x^{-1}\big)
\end{equation*}
for the right quantization of $g$ on the discrete Abelian group $\Xi$\,. Thus one has in $\ell^2(\Xi)$
\begin{equation*}\label{dundurel}
{\sf Op}_\Xi^R(g)=\mathcal F\,{\sf Op}\big(g\circ\mu^{-1}\big)\mathcal F^{-1}.
\end{equation*} 
On the other hand, by diagram \eqref{diagramm} (suitably restricted), 
\begin{equation*}\label{turcoman}
\mathcal F\,\mathbb D^\Omega\mathcal F^{-1}\!={\sf sch}\big(\Xi\!\ltimes\!\ell^{\infty,\O}(\Xi)\big)\quad{\rm and}\quad\mathcal F\,\mathbb{VO}^\Omega\mathcal F^{-1}\!={\sf sch}\big(\Xi\!\ltimes\!{\sf vo}^\O(\Xi)\big)\,.
\end{equation*}
Therefore we obtain from Theorem \ref{titus} the result
\begin{equation*}\label{petrucio}
{\rm dist}\Big({\sf Op}^R_\Xi(g),{\sf sch}\big(\Xi\!\ltimes\!\ell^{\infty,\O}(\Xi)\big)\Big)\ge {\sf D}^\O\big(g\circ\mu^{-1})=\limsup_{(\xi,x)\to\O\times\X}|g(\xi,x)|
\end{equation*}
for every $g\in{\sf VO}^\O(\X\!\times\!\Xi)\circ\mu$\,, a Gohberg type formula for right pseudodifferential operators on the discrete Abelian group $\Xi$\,. The standard case is 
\begin{equation*}\label{gunyali}
{\sf sch}\big(\Xi\!\ltimes\!\ell^{\infty,\delta\Xi}(\Xi)\big)={\sf sch}\big(\Xi\!\ltimes\!c_0(\Xi)\big)=\mathbb K\big[\ell^2(\Xi)\big]\,.
\end{equation*}}
\end{Remark}

\begin{Remark}\label{ayubid}
{\rm One has $\mathbb D^{\delta\Xi}=\mathbb K(\H)$\,, since ${\sf sch}\big(\Xi\!\ltimes\! c_0(\Xi)\big)=\mathbb K\big[\ell^2(\Xi)\big]$ by \cite{Wi} (a well-known fact in the $C^*$-algebra community). This may be considered "the standard case".  The neighborhoods of $\delta\Xi$ in $\beta\Xi$ are the complements of finite subsets of $\Xi$\,, so $\limsup_{\xi\to\delta\Xi}$ may be written as $\limsup_{\xi\to\infty}$\,. A rather common issue in the literature is when is a pseudodifferential operator ${\sf Op}(f)$ compact. By Lemma \ref{gumustekin}, in our setting this happens precisely when 
$$
\limsup_{\xi\to\infty}\sup_{x\in\X}|f(x,\xi)|=0\,.
$$}
\end{Remark}

\begin{Example}\label{aladin}
{\rm Let us consider the simple standard case $\X=\T^N$ (so $\Xi=\mathbb Z^N$) with $\O=\delta\mathbb Z^N$\!, leading above to the important ideal of compact operators. We have $\ell^{\infty,\delta\mathbb Z^N}\!\big(\mathbb Z^N\big)=c_0(\mathbb Z^N)$\,. To show that ${\sf vo}^{\delta\mathbb Z^N}\!\big(\mathbb Z^N\big)$ is much larger than its ideal $c_0\big(\mathbb Z^N\big)$\,, consider first a function 
$$
\dot\psi=\dot\psi_0+\dot\psi_1:\mathbb R^N\!\to\mathbb C\,,
$$
where $\dot\psi_0$ is homogeneous of degree $0$ and $\dot\psi_1\in C_0\big(\mathbb R^N\big)$\,. It is easy to check that the restriction $\psi$ of $\dot\psi$ to $\mathbb Z^N$ is in ${\sf vo}^{\delta\mathbb Z^N}\!\big(\mathbb Z^N\big)$\,. The component $\dot\psi_0$ (admitting radial limits at infinity in $\R^N$) already exhibits the difference. More generally, suppose that $\psi$ is the restriction to $\mathbb Z^N$ of a $C^1$-function $\dot\psi$ on $\R^N$ for which all the partial derivatives tend to zero at infinity. The formula
$$
\dot\psi(\alpha+\beta)-\dot\psi(\alpha)=\int_0^1\!\frac{d}{dt}\dot\psi(\alpha+t\beta)dt=\int_0^1\!\beta\!\cdot\!\nabla\dot\psi(\alpha+t\beta)dt\,,\quad\alpha,\beta\in\R^N
$$
shows immediately that $\psi\in{\sf vo}^{\delta\mathbb Z^N}\!\big(\mathbb Z^N\big)$\,. An example of such a function is $\dot\psi(\alpha):=h\big(|\alpha|^p\big)$ (slightly corrected around the origin), where $h$ is a bounded $C^1$-function of a single variable with bounded derivative and $p\in(-\infty,1)$\,. So $\psi(\xi):=\cos\big(\sqrt{|\xi|}\big)$ is vanishing oscillating in our sense, not having the previous form.}
\end{Example}

\begin{Remark}\label{cocan}
{\rm Among the $C^*$-algebras ${\sf VO}^\O(\X\!\times\!\Xi)$\,, the smallest one is ${\sf VO}^{\delta\Xi}(\X\!\times\!\Xi)$\,. A H\"ormander type symbolic calculus has not been developed for all Abelian compact groups, but when this is the case, the class $S^0_{1,0}(\X\!\times\!\Xi)$\,, for which a standard form of the Gohberg Lemma has been proven \cite{DR,MW}, is actually contained in ${\sf VO}^{\delta\Xi}(\X\!\times\!\Xi)$\,. In \cite[Remark\,4.2]{DR}, where the result needs a Lie group structure (maybe non-commutative), it is stated that much less than the full H\"ormander class is needed. Using notations as ours, what is required is that $f$ and $\partial^\beta_xf$ are bounded if $|\beta|=1$\,, and that 
\begin{equation}\label{otoman}
|(\Delta_qf)(x,\xi)|\le C\<\xi\>^{-\rho}\,,\quad\forall\,(x,\xi)\in\X\!\times\!\Xi
\end{equation}
for some $\rho$ strictly positive and for certain suitable functions $q:\X\to\mathbb C$\,, as described in \cite{RT1,RT2}, where the meaning of $\<\xi\>$ is also provided. We do not need any smoothness in $\X$\,, one has ${\sf osc}^\zeta_\psi=\Delta_\zeta$ for each character $\zeta:\X\to\mathbb C$ and the decrease in $\xi$ is arbitrarily slow.
}
\end{Remark}

\begin{Remark}\label{gunduz}
{\rm A compact Abelian Lie group $\X$ is (isomorphic to) a finite extension of a torus $\T^N$. The finite contribution is trivial if and only if $\X$ is connected. So the dual $\widehat\X\equiv \Xi$ is an extension involving $\mathbb Z^N$ and a finite group, this one being trivial precisely when $\Xi$ is torsion-free. We conclude that the class of compact Abelian Lie group is very restrictive compared with the huge class of compact Abelian groups. However, the fact that non-commutative $\X$ are treated  in \cite{DR} is remarkable; in particular special features appear when one tries to figure out relevant replacements for the absolute value $|f(x,\xi)|$ when this one is an operator; see Theorems 3.1 and 3.2 in \cite{DR}. Our approach fails in the non-commutative case, mainly because the unitary dual of $\X$ is no longer a group. Generalizing the result in \cite{Gr} (with previous contributions \cite{Go,Ho,KN}), valid for $\X=\R^N$, to non-compact Abelian groups also needs some new ideas; this is under investigation.
}
\end{Remark}


\bigskip
{\bf Acknowledgements.} The author has been supported by the Fondecyt Project 1200884. 

\bigskip
ADDRESS

\smallskip
M. M\u antoiu:

Facultad de Ciencias, Departamento de Matem\'aticas, Universidad de Chile 

Las Palmeras 3425, Casilla 653, Santiago, Chile.

E-mail: {\it mantoiu@uchile.cl}


\begin{thebibliography}{00}

\bibitem{DR} A. Dasgupta and M. Ruzhansky: \emph{Gohberg Lemma, Compactness, and Essential Spectrum of Operators on Compact Lie Groups}, J. d'Analyse Math. {\bf 128}(1), 179--190, (2016).

\bibitem{Go} I. Gohberg, \emph{On the Theory of Multidimensional Singular Integral Equations}, Soviet Math. Dokl. {\bf 1}, 960--963, (1960).

\bibitem{Gr} V. Grushin: \emph{Pseudo-differential Operators in $\R^n$ with Bounded Symbols}, Funct. Anal. Appl. {\bf 4}, 202--212, (1970).

\bibitem{Ho} L. H\"ormander: \emph{Pseudodifferential Operators and Hypoelliptic Operators}, in Pseudodifferential Operators [Russian translation], Mir, Moskow, 9--62, (1967).

\bibitem{KN} J.\,J. Kohn and L. Nirenberg: \emph{An Algebra of Pseudodifferential Operators}, in Pseudodifferential Operators [Russian translation], Mir, Moskow, 63--87, (1967).

\bibitem{M} M. M\u antoiu: {\it Compactifications, Dynamical Systems at Infinity and the Essential Spectrum of Generalized Schr\"odinger Operators}, J. reine angew. Math. 550, 211--229, (2002).

\bibitem{Ma} M. M\u antoiu: {\it Abelian $C^*$-Algebras which Are Independent with Respect to a Filter}, J. London Math. Soc. {\bf 71}, 740--758, (2005).


\bibitem{MW} S. Molahajloo and M. W. Wong. \emph{Ellipticity, Fredholmness and Spectral Invariance of Pseudo-differential Operators on $S^1$}, J. Pseudo-Differ. Oper. Appl., {\bf 1}(2), 183--205, (2010).

\bibitem{RT1} M. Ruzhansky and V. Turunen: {\it Pseudodifferential Operators and Symmetries}, Pseudo-Differential Operators: Theory and Applications {\bf 2}, Birkh\"auser Verlag, 2010. 

\bibitem{RT2} M. Ruzhansky and V. Turunen:  {\it Global Quantization of Pseudo-differential Operators on Compact Lie Groups, SU(2) and 3-Sphere}, Int. Math. Res. Not. IMRN, {\bf 11}, 2439--2496, (2013).

\bibitem{RVR} M. Ruzhansky and J.\,P. Velasquez Rodriguez: \emph{Non-Harmonic Gohberg's Lemma, Gershgoshin Theory and Heat Equation on Manifolds with Boundary}, Math. Nachr. {\bf 294}, 1783--1820, (2021).


\bibitem{Wi} D. Williams: {\it Crossed Products of $C^*$-Algebras}, Mathematical Surveys and Monographs, {\bf 134}, American Mathematical Society, 2007.

\end{thebibliography}
\end{document}